\newtheorem{theorem}{Theorem}[section]
\newtheorem{lemma}[theorem]{Lemma}
\newtheorem{corollary}[theorem]{Corollary}
\theoremstyle{definition}
\theoremstyle{remark}
\newtheorem{remark}[theorem]{Remark}
\numberwithin{equation}{section}
\begin{document}

\title[Isometric actions on a locally compact metric space]{On the action of the group of isometries on a locally compact metric space}

\author[Antonios Manoussos]{Antonios Manoussos}
\address{Fakult\"{a}t f\"{u}r Mathematik, SFB 701, Universit\"{a}t Bielefeld, Postfach 100131, D-33501 Bielefeld, Germany}
\email{amanouss@math.uni-bielefeld.de}
\thanks{During this research the author was fully supported by SFB 701 ``Spektrale Strukturen und Topologische Methoden in der Mathematik" at the University of Bielefeld,
Germany. He would also like to express his gratitude to Professor H. Abels for his support.}

\subjclass[2010]{Primary 37B05; Secondary 54H20}

\date{}

\keywords{Isometries, proper action, pseudo-components.}

\begin{abstract}
In this short note we give an answer to the following question. Let $X$ be a locally compact metric space with group of isometries $G$. Let $\{g_i\}$ be a net in $G$
for which $g_ix$ converges to $y$, for some $x,y\in X$. What can we say about the convergence of $\{ g_i\}$? We show that there exist a subnet $\{g_j\}$ of $\{g_i\}$
and an isometry $f:C_x\to X$  such that $g_{j}$ converges to $f$ pointwise on $C_x$ and $f(C_x)=C_{f(x)}$, where $C_x$ and $C_y$ denote the pseudo-components of $x$
and $y$ respectively. Applying this we give short proofs of the van Dantzig--van der Waerden theorem (1928) and Gao--Kechris theorem (2003).
\end{abstract}

\maketitle

\section{The main result and some applications}
A few words about the notation we shall be using. In what follows, $X$ will denote a locally compact metric space with group of isometries $G$. If we endow $G$ with
the topology of pointwise convergence then $G$ is a topological group \cite[Ch. X, \S 3.5 Corollary]{bour2}. On $G$ there is also the topology of uniform convergence
on compact subsets which is the same as the compact-open topology. In the case of a group of isometries these topologies coincide with the topology of pointwise
convergence, and the natural action of $G$ on $X$ with $(g,x)\mapsto g(x)$, $g\in G$, $x\in X$, is continuous \cite[Ch. X, \S 2.4 Theorem 1 and \S 3.4 Corollary
1]{bour2}. For $F\subset G$, let $K(F):=\{ x\in X\, |\,\,\mbox{the set}\,\, Fx\,\,\mbox{has compact closure in}\,\, X\}$. The sets $K(F)$ are clopen \cite[Lemma
3.1]{manstra}.

\begin{lemma}\label{lemma1}
Let $\Gamma =\{g_i\}$ be a net in $G$ and $x\in K(\Gamma)$ such that $g_ix$ converges to $y$ for some $y\in X$. Then a subnet of $\Gamma$ converges to an isometry
$f:K(\Gamma)\to X$ on $K(\Gamma)$.
\end{lemma}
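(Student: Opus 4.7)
The plan is to build $f$ as a pointwise limit of a suitable subnet of $\{g_i\}$ on $K(\Gamma)$ via a Tychonoff compactness argument, and then to observe that a pointwise limit of isometries is automatically an isometry.

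First I would record that by the very definition of $K(\Gamma)$, the orbit $\Gamma z = \{g_i z\}$ has compact closure $\overline{\Gamma z} \subset X$ for every $z \in K(\Gamma)$. Consequently, the rule $i \mapsto (g_i z)_{z\in K(\Gamma)}$ sends $\Gamma$ into the product
\[
P \;:=\; \prod_{z\in K(\Gamma)}\overline{\Gamma z},
\]
which is compact in the product topology by Tychonoff's theorem. Convergence in $P$ is, by definition, pointwise convergence on $K(\Gamma)$.

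Next I would extract a subnet $\{g_j\}$ whose image in $P$ converges; this yields a map $f : K(\Gamma) \to X$ with $g_j z \to f(z)$ for every $z \in K(\Gamma)$. Since each $g_j$ preserves the metric and $d$ is continuous,
\[
d(f(z),f(w)) \;=\; \lim_j d(g_j z,\,g_j w) \;=\; d(z,w)
\]
for all $z,w \in K(\Gamma)$, so $f$ is an isometry from $K(\Gamma)$ into $X$, as required. (Note that automatically $f(x)=y$, since any subnet of the convergent net $g_i x \to y$ still converges to $y$.)

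The only point that requires any care is that $K(\Gamma)$ need not be separable, so a direct Arzel\`a--Ascoli or diagonal-subsequence approach is unavailable; this is precisely why the statement is phrased for nets and why Tychonoff's theorem, rather than a countable diagonal extraction, is the right tool. Once the product $P$ is in view, the rest of the argument is essentially automatic.
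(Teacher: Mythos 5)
Your proof is correct, but it follows a genuinely different (more self-contained) route than the paper. The paper's proof is a one-liner: the restrictions $g_i|_{K(\Gamma)}$ form an equicontinuous family of maps (they are isometries) whose orbits $\Gamma z$ have compact closure for every $z\in K(\Gamma)$, so the Arzel\`a--Ascoli theorem gives that the family has compact closure in the space of continuous maps $K(\Gamma)\to X$, and a subnet converging to an isometry follows immediately. You instead unpack that citation: Tychonoff compactness of $\prod_{z\in K(\Gamma)}\overline{\Gamma z}$ yields a pointwise-convergent subnet $\{g_j\}$, and the identity $d(f(z),f(w))=\lim_j d(g_jz,g_jw)=d(z,w)$ shows the limit is an isometry. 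In effect you reprove, for this special case, the Ascoli theorem the paper quotes; what this buys is independence from the reference, and since the $g_j$ are isometries (hence equicontinuous), your pointwise convergence automatically upgrades to uniform convergence on compact subsets, so you do obtain convergence in the topology the paper works with. One side remark of yours is off the mark: the general (Bourbaki/Kelley) Ascoli theorem is formulated for nets and imposes no separability assumption on the domain, so it is not \emph{unavailable} here --- it is exactly the tool the paper uses; only the sequential diagonal-extraction version requires a countable dense subset. This inaccuracy does not affect the validity of your argument.
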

\begin{proof}
Let $g_i|_{K(\Gamma)}$ denote the restriction of $g_i$ on $K(\Gamma)$. Arzela--Ascoli theorem implies that the set $\{ g_i|_{K(\Gamma)}:K(\Gamma)\to X \}$ has compact
closure in the set of all continuous maps from $K(\Gamma)$ to $X$. Thus, there exist a subnet $\{g_j\}$ of $\{g_i\}$ and an isometry $f:K(\Gamma)\to X$ such that
$g_j\to f$ on $K(\Gamma)$.
\end{proof}

In \cite{kechris} S. Gao and A. S. Kechris introduced the concept of pseudo-components. These are the equivalence classes $C_x$ of the following equivalence relation:
$x\sim y$ if and only if x and y, as also y and x, can be connected by a finite sequence of intersecting open balls with compact closure. The pseudo-components are
clopen \cite[Proposition 5.3]{kechris}. We call $X$ pseudo-connected if it has only one pseudo-component. An immediate consequence of the definitions is that
$gC_x=C_{gx}$ for every $g\in G$. Another notion, that will be used in the proofs, is the radius of compactness $\rho (x)$ of $x\in X$ \cite{kechris}. Let $B_r(x)$
denote the open ball centered at $x$ with radius $r>0$. Then  $\rho (x):=\sup \{\, r>0\, |\,\, B_r(x)\,\,\mbox{ has compact closure}\}$. If $\rho (x)=+\infty$ for
some $x\in X$ then every ball has compact closure (i.e., $X$ has the Heine--Borel property), hence $\rho (x)=+\infty$ for every $x\in X$. If $\rho (x)$ is finite for
some $x\in X$ then the radius of compactness is a Lipschitz map \cite[Proposition 5.1]{kechris}. Note that $\rho$ is $G$-invariant.

\begin{lemma}\label{lemma2}
Let $x, y\in X$ and $\{ g_i\}_I$ be a net in $G$ with $g_ix\to y$. Then there is an index $i_0\in I$ such that $C_x\subset K(F)$, where $F:=\{ g_i\,|\,i\geq i_0\}$.
\end{lemma}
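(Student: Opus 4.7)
The plan is to first choose an index $i_0$ so that $x \in K(F)$ for $F := \{g_i : i \geq i_0\}$, and then propagate this membership throughout $C_x$ via the chain structure of pseudo-components, exploiting the $G$-invariance of $\rho$.

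Since $X$ is locally compact, $\rho(y) > 0$; pick $\delta \in (0, \rho(y))$ so that $\overline{B_\delta(y)}$ is compact. Because $g_ix \to y$, there is $i_0 \in I$ with $d(g_ix, y) < \delta$ for every $i \geq i_0$. Setting $F := \{g_i : i \geq i_0\}$ gives $Fx \subset \overline{B_\delta(y)}$, so $\overline{Fx}$ is compact and $x \in K(F)$.

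The key propagation step is this. For any $p \in K(F)$ the orbit closure $\overline{Fp}$ is compact, and by the $G$-invariance and continuity of $\rho$ the function $\rho$ is identically $\rho(p)$ on $\overline{Fp}$. If $q$ satisfies $d(p,q) < \rho(p)$, I would deduce $q \in K(F)$ as follows: pick $\epsilon > 0$ with $d(p,q) + \epsilon < \rho(p)$ and cover $\overline{Fp}$ by finitely many open balls $B_\epsilon(q_1), \ldots, B_\epsilon(q_m)$, each with compact closure. For every $g_i \in F$, $g_ip$ lies in some $B_\epsilon(q_j)$, whence $g_iq$ lies within $d(p,q) + \epsilon < \rho(q_j)$ of $q_j$, placing $g_iq$ in the compact ball $\overline{B_{d(p,q)+\epsilon}(q_j)}$. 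Thus $Fq$ is contained in a finite union of compact balls, so $q \in K(F)$.

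Finally, for each $z \in C_x$ I would pick a chain $x = p_0, p_1, \ldots, p_n = z$ of points with consecutive pairs in a common open ball with compact closure, and refine it so that $d(p_{k-1}, p_k) < \rho(p_{k-1})$ for each $k$. Applying the propagation step inductively yields $p_k \in K(F)$ for all $k$, and hence $z \in K(F)$; since the same $F$ works uniformly for all $z \in C_x$, this gives $C_x \subset K(F)$. The main obstacle is the chain refinement: within each ball $B_{r_k}(c_k)$ of the given chain, with $\overline{B_{r_k}(c_k)}$ compact and $\rho$ continuous and strictly positive there, one must chain $p_{k-1}$ to $p_k$ via short steps, which requires care when $\overline{B_{r_k}(c_k)}$ is not topologically connected.
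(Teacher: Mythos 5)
Your choice of the tail $F$ and your propagation step are correct, and they are in substance the same as the paper's argument: the paper fixes $r<\rho(x)$, takes a sequence $g_nz$ with $z\in B_r(x)$, extracts a subsequence with $g_nx\to w$, and uses $d(g_nz,w)\le d(z,x)+d(g_nx,w)<\rho(x)=\rho(w)$ together with the $G$-invariance and continuity of $\rho$. You prove the same implication ($p\in K(F)$ and $d(p,q)<\rho(p)$ imply $q\in K(F)$) by a finite-cover argument instead of a subsequence extraction; that is fine, provided you take the centers $q_j$ in $\overline{Fp}$ so that $\rho(q_j)=\rho(p)$, which is what your inequality $d(p,q)+\epsilon<\rho(q_j)$ uses.

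The obstacle you flag at the end is the one place where your write-up stops short, and it is an artifact of taking the paper's informal description of pseudo-components at face value rather than the definition it cites. In Gao--Kechris the relation is defined directly through the radius of compactness: $x\sim y$ if and only if there are chains $x=p_0,p_1,\dots,p_n=y$ and $y=q_0,q_1,\dots,q_m=x$ with $d(p_{k-1},p_k)<\rho(p_{k-1})$ and $d(q_{k-1},q_k)<\rho(q_{k-1})$ for all $k$; the sentence about ``intersecting open balls with compact closure'' paraphrases this (each ball is centered at a point of the previous one and has radius smaller than $\rho$ of its center). With that definition no refinement is needed at all: the forward chain from $x$ to $z$ consists of exactly the steps your propagation lemma handles, so induction along it gives $C_x\subset K(F)$ --- this is also what the paper's phrase ``follows from the definition of $C_x$'' means. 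Your worry about the literal reading is well founded: for two arbitrary points $u,v$ of a ball $B_s(c)$ with compact closure one only knows $d(c,u)<s\le\rho(c)$, so the center reaches every point of the ball in one $\rho$-step, but $\rho(u)$ may well be smaller than $d(u,c)$ and there need not exist any short-step chain from $u$ to $c$ or to $v$; hence the refinement you attempt cannot be carried out in general, and the correct fix is to invoke the precise Gao--Kechris definition, after which your argument closes exactly as the paper's does.
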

\begin{proof}
Since $X$ is locally compact there exists an index $i_0$ such that the set $F(x)$ has compact closure, where $F:=\{ g_i\,|\,i\geq i_0\}$. We claim that for every
$z\in C_x$ the set $F(z)$ also has compact closure, hence $C_x\subset K(F)$. The strategy is to start with an open ball $B_r(x)$ with radius $r<\rho (x)$ and prove
that $F(z)$ has compact closure for every $z\in B_r(x)$. Then our claim follows from the definition of $C_x$. To prove the claim take a sequence $\{g_n z\}\subset F$.
Since the closure of $F(x)$ is compact we may assume, upon passing to a subsequence, that $g_nx\to w$ for some $w$ in the closure of $F(x)$. Assume that $\rho (x)$ is
finite and take a positive number $\varepsilon$ such that $r+\varepsilon < \rho (x)$. Then for $n$ big enough
\[
d(g_nz,w)\leq d(g_nz,g_nx) + d(g_nx,w)=d(z,x)+ d(g_nx,w)<r+\varepsilon < \rho (x).
\]
Recall that the radius of convergence is a continuous map, and since $g_nx\to w$ then $ \rho (x)= \rho (w)$. So, the sequence $\{g_n z\}$ is contained eventually in a
ball of $w$ with compact closure, hence it has a convergence subsequence. The same also holds in the case where $\rho (x)=+\infty$.
\end{proof}

\begin{theorem}\label{theorem}
Let $X$ be a locally compact metric space with group of isometries $G$ and let $\{g_i\}$ be a net in $G$ for which $g_ix$ converges to $y$, for some $x,y\in X$. Then
there exist a subnet $\{g_j\}$ of $\{g_i\}$ and an isometry $f:C_x\to X$ such that $g_{j}$ converges to $f$ pointwise on $C_x$ and $f(C_x)=C_{f(x)}$
\end{theorem}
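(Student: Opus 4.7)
The plan is to combine the two preceding lemmas to produce $f$ and then to establish the identity $f(C_x)=C_{f(x)}$ by a two-sided argument, using the inverse net for the surjectivity half.

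First I would build the isometry $f$. By Lemma~\ref{lemma2}, there is an index $i_0$ such that, for the tail $F:=\{g_i\,|\,i\ge i_0\}$, we have $C_x\subset K(F)$. Now apply Lemma~\ref{lemma1} to the net $F$ (with the given $x$, which lies in $K(F)$ and still satisfies $g_ix\to y$ along the tail): this yields a subnet $\{g_j\}$ of $\{g_i\}$ and an isometry defined on $K(F)$; restricting it to the clopen set $C_x\subset K(F)$ gives the desired $f:C_x\to X$ with $g_j\to f$ pointwise on $C_x$, and of course $f(x)=y$.

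Next I would prove the inclusion $f(C_x)\subset C_{f(x)}$. Since $C_{f(x)}$ is clopen and $g_jx\to f(x)$, we have $g_jx\in C_{f(x)}$ eventually, so $g_j(C_x)=C_{g_jx}=C_{f(x)}$ for $j$ large. For any $z\in C_x$ this forces $g_j(z)\in C_{f(x)}$ eventually, and because $C_{f(x)}$ is closed, the limit $f(z)=\lim g_j(z)$ also lies in $C_{f(x)}$.

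The main obstacle is the reverse inclusion $C_{f(x)}\subset f(C_x)$, for which I would apply the whole construction to the inverse net. Observe that $d(g_j^{-1}(y),x)=d(y,g_jx)\to 0$, so $g_j^{-1}(y)\to x$. Hence, by the same argument applied to the net $\{g_j^{-1}\}$ at the point $y$, a further subnet (which I will still denote $\{g_j^{-1}\}$) converges pointwise on $C_y=C_{f(x)}$ to an isometry $h:C_y\to X$ with $h(y)=x$ and (by the previous paragraph) $h(C_y)\subset C_x$. It remains to show $f\circ h=\mathrm{id}_{C_y}$, whence $f$ is surjective onto $C_y$. For $w\in C_y$, since each $g_j$ is an isometry,
\[
d\bigl(w,f(h(w))\bigr)\le d\bigl(g_j(g_j^{-1}(w)),g_j(h(w))\bigr)+d\bigl(g_j(h(w)),f(h(w))\bigr)=d\bigl(g_j^{-1}(w),h(w)\bigr)+d\bigl(g_j(h(w)),f(h(w))\bigr),
\]
and both terms on the right tend to zero by the defining convergences of $h$ and $f$. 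This forces $f(h(w))=w$, completing the proof. The delicate point here is precisely this triangle-inequality trick, which lets us compose two limits (of $g_j$ and of $g_j^{-1}$) taken along the same subnet, without needing any uniform convergence on $C_x$.
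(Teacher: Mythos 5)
Your proposal is correct and follows essentially the same route as the paper: Lemma~\ref{lemma2} plus Lemma~\ref{lemma1} give $f$ on $C_x$, the forward inclusion uses that $C_{f(x)}$ is clopen and $g_jC_x=C_{g_jx}$, and the reverse inclusion passes to the inverse net to get $h:C_{f(x)}\to X$ with $h(C_{f(x)})\subset C_x$ and then identifies $w=g_kg_k^{-1}(w)\to f(h(w))$. Your triangle-inequality estimate just makes explicit the convergence the paper uses implicitly in that last step, so there is no substantive difference.
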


\begin{proof}
By Lemma \ref{lemma2} there is an index $i_0\in I$ such that $C_x\subset K(F)$, where $F:=\{ g_i\,|\,i\geq i_0\}$. Hence, by Lemma \ref{lemma1}, there exists a subnet
$\{ g_j\}$ of $\{ g_i\}$ which converges to an isometry $f:K(F)\to X$ on $K(F)$. Therefore, $g_{j}\to f$ on $C_x$. Let us show that $f(C_x)=C_{f(x)}$. Since
$d(x,g_j^{-1}f(x))=d(g_jx,f(x))\to 0$ it follows that $g_j^{-1}f(x)\to x$. Hence, by repeating the previous procedure, there exist a subnet $\{g_k\}$ of $\{g_j\}$ and
an isometry $h:C_{f(x)}\to X$ such that $g_{k}^{-1}\to h$ pointwise on $C_{f(x)}$ and $h(f(x))=x$. Note that $g_{k}x \in C_{f(x)}$ eventually for every $k$, since
$g_{k}x\to f(x)$ and  $C_{f(x)}$ is clopen. Therefore, $g_kC_x=C_{g_{k}x}=C_{f(x)}$. Take a point $z\in C_x$. Then, $g_{k}z\to f(z)$ and since $C_{f(x)}$ is clopen
then $f(z)\in C_{f(x)}$, so $f(C_x)\subset C_{f(x)}$. By repeating the same arguments as before, it follows that $hC_{f(x)}\subset C_x$. Take now a point $w\in
C_{f(x)}$. Then $h(w)\in C_x$, hence $g_{k}^{-1}(w)\in C_x$ eventually for every $k$. So, $w=g_{k}g_{k}^{-1}(w)\to f(h(w))\in f(C_x)$ from which follows that
$C_{f(x)}\subset f(C_x)$.
\end{proof}

A few words about properness. A continuous action of a topological group $H$ on a topological space $Y$ is called proper (or Bourbaki proper) if the map $H\times Y\to
Y\times Y \,\,\mbox{with}\,\,(g,x)\mapsto(x,gx),\,\,\mbox{for}\,\, g\in H\,\,\mbox{and}\,\, x\in Y$, is proper, i.e., it is continuous, closed and the inverse image
of a singleton is a compact set \cite[Ch. III, \S 4.1 Definition 1]{bour1}. In terms of nets, a continuous action is proper if and only if whenever we have two nets
$\{g_i\}$ in $H$ and $\{x_i\}$ in $Y$, for which both $\{x_i\}$ and $\{g_ix_i\}$ converge, then $\{g_i\}$ has a convergent subnet. For isometric actions, it is easy
to see that a continuous action is proper if and only if whenever we have a net $\{g_i\}$ in $H$ for which $\{g_ix\}$ converges for some $x\in Y$, then $\{g_i\}$ has
a convergent subnet. If $H$ is locally compact and $Y$ is Hausdorff, then $H$ acts properly on $Y$ if and only if for every $x,y\in Y$ there exist neighborhoods $U$
and $V$ of $x$ and $y$, respectively, such that the set $\{ g\in H\, |\, gU\cap V\neq\emptyset\}$ has compact closure in $H$ \cite[Ch. III, \S 4.4 Proposition
7]{bour1}. Observe that if $H$ acts properly on a locally compact space $Y$ then $H$ is also locally compact.

A direct implication of Theorem \ref{theorem} is the van Dantzig--van der Waerden theorem  \cite{d-w}. The advantage of our proof, comparing to the proofs given in
the original work of van Dantzig--van der Waerden or in \cite[Theorem 4.7, pp.~46--49]{kob-nom}, is that it is considerably shorter.

\begin{corollary}\label{cor1}(van Dantzig--van der Waerden theorem 1928)
Let $X$ be a connected locally compact metric space with group of isometries $G$. Then $G$ acts properly on $X$ and is locally compact.
\end{corollary}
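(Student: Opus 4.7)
The plan is to reduce the connected case directly to Theorem \ref{theorem} by observing that connectedness forces pseudo-connectedness, so that the ``local'' conclusion of the theorem (convergence only on $C_x$) becomes a genuinely global statement. Since the pseudo-components are clopen and $X$ is connected, $X$ admits only one pseudo-component, namely $X$ itself; thus $C_x = X$ for every $x \in X$.

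Next I would verify properness using the net criterion specialized to isometric actions that was recorded in the paragraph preceding the corollary: it suffices to show that whenever $\{g_i\}$ is a net in $G$ and $g_i x \to y$ for some $x, y \in X$, the net $\{g_i\}$ has a subnet converging in $G$. Apply Theorem \ref{theorem} to obtain a subnet $\{g_j\}$ and an isometry $f : C_x \to X$ with $g_j \to f$ pointwise on $C_x$ and $f(C_x) = C_{f(x)}$. Because $C_x = C_{f(x)} = X$, the map $f$ is an isometry of $X$ onto $X$, i.e., $f \in G$, and pointwise convergence on $C_x = X$ is precisely convergence in the topology of $G$ (as noted in the opening paragraph of Section 1). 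Hence $g_j \to f$ in $G$, giving the required convergent subnet and therefore properness.

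Finally, local compactness of $G$ is immediate from the observation made at the end of the paragraph on properness: a topological group acting properly on a locally compact Hausdorff space is itself locally compact. Since $X$ is locally compact and $G$ acts properly on $X$ by the previous step, $G$ is locally compact.

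The only place where any genuine work happens is the identification $C_x = X$ for the connected space $X$, and this is handed to us for free by the clopen nature of pseudo-components established in \cite[Proposition 5.3]{kechris}; there is no real obstacle, as the corollary is essentially Theorem \ref{theorem} read in the pseudo-connected case, combined with a standard general fact about proper actions on locally compact spaces.
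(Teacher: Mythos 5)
Your argument is correct and is exactly the derivation the paper intends (it gives no explicit proof, calling the corollary a ``direct implication'' of Theorem \ref{theorem}): connectedness plus clopenness of pseudo-components gives $C_x=X$, Theorem \ref{theorem} then yields a subnet converging pointwise on all of $X$ to a surjective isometry, and the net criterion for properness of isometric actions together with the remark that a group acting properly on a locally compact space is locally compact finishes the proof.
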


Another application of Theorem \ref{theorem} is that we can rederive the results of Gao and Kechris in \cite[Theorem 5.4 and Corollary 6.2]{kechris}.

\begin{corollary}\label{cor2}(Gao--Kechris theorem 2003)
Let $X$ be a locally compact metric space  with finitely many pseudo-components. Then the group of isometries $G$ of $X$ is locally compact. If $X$ is
pseudo-connected, then $G$ acts properly on $X$.
\end{corollary}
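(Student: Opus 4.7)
The plan is to derive both claims from Theorem~\ref{theorem}: first establish the pseudo-connected case directly, and then bootstrap to the finite-components case by restricting to a finite-index subgroup that stabilizes the pseudo-components. For the pseudo-connected statement, $C_x=X$ for every $x$, so given a net $\{g_i\}$ in $G$ with $g_ix\to y$, Theorem~\ref{theorem} produces a subnet $\{g_j\}$ converging pointwise on $X$ to an isometry $f:X\to X$ satisfying $f(X)=C_{f(x)}=X$; such $f$ is surjective and hence lies in $G$. Since the topology on $G$ is that of pointwise convergence, $g_j\to f$ in $G$. By the characterization of proper isometric actions recalled before Corollary~\ref{cor1}, this shows that $G$ acts properly on $X$, and a proper action on a locally compact space forces $G$ to be locally compact, giving the conclusion in this case.

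For the general case, let $C_1,\ldots,C_n$ enumerate the pseudo-components. Since $gC_x=C_{gx}$, the group $G$ permutes the $C_i$, and I let $G_0\le G$ denote the kernel of the induced homomorphism $G\to S_n$. Because the $C_i$ are clopen, picking representatives $x_i\in C_i$ and requiring $gx_i\in C_i$ for each $i$ defines an open neighborhood of the identity contained in $G_0$, so $G_0$ is an open (finite-index) subgroup of $G$. Restriction to components gives an injective homomorphism $\Phi:G_0\to\prod_{i=1}^n\mathrm{Isom}(C_i)$, and since the $C_i$ partition $X$, pointwise convergence on $X$ coincides with pointwise convergence on each $C_i$, so $\Phi$ is a topological embedding. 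Each $C_i$ is clopen, hence locally compact, and pseudo-connected by definition, so the first step applies to each factor: every $\mathrm{Isom}(C_i)$ is locally compact, and the finite product is locally compact as well.

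The principal remaining step, and the main obstacle, is to show that $\Phi(G_0)$ is closed in the product. Given a limit point $(f_i)$ arising from a net $\{g_\alpha\}$ in $G_0$ with $g_\alpha|_{C_i}\to f_i$ for every $i$, I define $f:X\to X$ by $f|_{C_i}=f_i$ and check that $f$ is a bijective isometry of $X$ preserving each $C_i$. The nontrivial point is preservation of distances between points in different components, which follows from $d(f(x),f(y))=\lim d(g_\alpha x,g_\alpha y)=d(x,y)$ since $g_\alpha\to f$ pointwise; surjectivity is inherited from the $f_i$, and $f(C_i)=f_i(C_i)=C_i$ shows $f\in G_0$. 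Hence $\Phi(G_0)$ is closed, so $G_0$ is a closed subgroup of a locally compact group and therefore locally compact, and finally $G$ is locally compact because $G_0$ is open in $G$.
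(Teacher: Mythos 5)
Your proposal is correct, and while the pseudo-connected case is handled exactly as in the paper (Theorem \ref{theorem} gives a subnet converging pointwise to an isometry $f$ with $f(X)=C_{f(x)}=X$, hence a subnet convergent in $G$, hence properness and local compactness), your treatment of the finitely-many-components case is genuinely different. The paper argues inside $G$ directly: it picks $x_m\in C_m$ and balls $B_r(x_m)\subset C_m$ with compact closure, shows $V=\bigcap_m\{g\in G: gx_m\in B_r(x_m)\}$ is an identity neighborhood, and applies Theorem \ref{theorem} componentwise to show every net in $V$ has a subnet converging in $G$, so $\overline{V}$ is compact. You instead pass to the open finite-index subgroup $G_0$ stabilizing each $C_i$, embed it by restriction into $\prod_i \mathrm{Isom}(C_i)$, prove the image is closed, and quote the pseudo-connected case for each factor. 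Your closedness argument is sound, and the cross-component distance computation $d(f(x),f(y))=\lim d(g_\alpha x,g_\alpha y)$ is exactly the right point to check, since $\Phi(G_0)$ is in general a proper subgroup of the product. What your route buys is a structural statement ($G$ has an open subgroup isomorphic to a closed subgroup of the product of the component isometry groups); what the paper's route buys is brevity and an explicit compact identity neighborhood without ever leaving $G$.

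One step you dismiss as ``by definition'' does require a short justification: that each $C_i$, viewed as a metric space in its own right, is locally compact and pseudo-connected, so that the pseudo-connected case applies to $\mathrm{Isom}(C_i)$. Local compactness is immediate from clopenness, but pseudo-connectedness needs a line: any open ball with compact closure meets only one pseudo-component, so a chain of such balls joining two points of $C_i$ lies entirely in $C_i$ together with its centers; since $C_i$ is clopen, such a ball is also a ball of the space $C_i$ and its closure in $C_i$ coincides with its closure in $X$ (equivalently, the radius of compactness computed in $C_i$ is at least the one computed in $X$), so the same chain witnesses the equivalence inside $C_i$. With that remark supplied, your proof is complete.
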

\begin{proof}
Let $C_1, C_2, \ldots, C_n$ denote the pseudo-components of $X$ and take points $x_1\in C_1, x_2\in C_2, \ldots, x_n\in C_n$ and open balls $B_r(x_m)\subset C_m$,
$m=1,2,\ldots, n$, $r>0$ such that all $B_r(x_m)$ have compact closures. We will show that the set $V:=\bigcap_{m=1}^{n}\{ g\in G\,|\, gx_m\in B_r(x_m)\}$ is an open
neighborhood of the identity in $G$ with compact closure. Indeed, take a net $\{ g_i\}$ in $V$. Since each $B_r(x_m)$ has compact closure there exist a subnet $\{
g_j\}$ of $\{ g_i\}$ and points $y_1\in C_1, y_2\in C_2, \ldots, y_n\in C_n$ such that $g_jx_m\to y_m$ for every $m=1,2,\ldots, n$. Theorem \ref{theorem} implies that
there exist a subnet $\{ g_l\}$ of $\{ g_j\}$ and isometries $f_m: C_{m}\to X$  such that $g_l\to f_m$ on $C_{m}$ and $f_m(C_{m})=C_{m}$ for all $m$. The last implies
that $\{ g_l\}$ converges to an isometry on $X$, hence $V$ has compact closure.

If $X$ is pseudo-connected the proof of the statement follows directly from Theorem \ref{theorem}.
\end{proof}

\begin{remark}
Note that in Corollary \ref{cor2} we do not require that $X$ is separable as in \cite[Theorem 5.4 and Corollary 6.2]{kechris}. This is not a real improvement since if
$X$ has countably many pseudo-components then it is separable. Indeed, we define a relation on $X$ by $x\mathcal{S} y$ if and only if there exist separable balls
$B_r(x)$ and $B_l(y)$ with $y\in B_r(x)$ and $x\in B_l(y)$. Let $U(x)$ be the equivalence class of $x$ in the transitive closure of the relation $\mathcal{S}$. Then,
each $U(x)$ is a separable clopen subset of $X$ \cite[Lemma 3 in Appendix 2]{kob-nom}. By construction $C_x\subset U(x)$, therefore $X$ is separable.
\end{remark}

\noindent \textbf{Acknowledgements.} We would like to thank the referee for an extremely careful reading of the manuscript. Her/his remarks and comments helped us to
improve considerably the presentation of the paper.

\end{document}